\documentclass[11pt]{amsart}

\usepackage{graphics}
\usepackage{graphicx}
\usepackage{epigraph} %esto es por si hay dedicatoria
\usepackage{array}
\usepackage{enumitem} % para enumerar con roman
\usepackage{amsmath, amssymb, amsthm}
\usepackage{subfigure}
\usepackage{orcidlink}
\DeclareMathAlphabet{\mathpzc}{OT1}{pzc}{m}{it} %\mathpzc{N}
\usepackage{mathrsfs}
\usepackage[small,bf]{caption}
\usepackage{tikz-cd}
%%%%%
\usepackage{geometry}
\makeatletter
%%

%---------------------------------------------
%referencias

\usepackage{hyperref} %this have to be load BEFORE amsrefs

\usepackage[alphabetic]{amsrefs} %alph is for references as [NameYear]
 %para que salga ArXiv en los preprints

\usepackage{tikz}
\usetikzlibrary{matrix,arrows}

%------------------------------------------------------------
% Theorem like environments
%
\newtheorem{teorema}{Teorema}[section]

\theoremstyle{plain}

\newtheorem{teo}{Theorem}[section]
\newtheorem{definition}{Definition}

\newtheorem{lemma}[teorema]{Lemma}

\newtheorem{proposition}[teorema]{Proposition}

\numberwithin{equation}{section}

\newcommand{\Acal}{\mathcal{A}}

\newcommand{\Lcal}{\mathcal{L}}

\newcommand{\Ocal}{\mathcal{O}}

\newcommand{\Ical}{\mathcal{I}}
\newcommand{\Fcal}{\mathcal{F}}

\newcommand{\Pro}{\mathbb{P}}

\newcommand{\C}{\mathbb{C}}

\newcommand{\Q}{\mathbb{Q}}
\newcommand{\R}{\mathbb{R}}

\geometry{left=3cm, bottom=4cm, right=3cm}
%--------------------------------------------------------
\begin{document}

\title{An upper bound for the generalized greatest common divisor of rational points}
\author{Benjamín Barrios \orcidlink{0009-0002-8920-313X}}
\begin{abstract}
    Let $X$ be a smooth projective variety defined over a number field $K$. We give an upper bound for the generalized greatest common divisor of a point $x\in X$ with respect to an irreducible subvariety $Y\subseteq X$ also defined over $K$. To prove the result, we stablish a rather uniform Riemann--Roch type inequality.
\end{abstract}

\address{Departamento de Matemáticas, Pontificia Universidad Católica de Chile. Facultad de Matemáticas, 4860 Av. Vicuña Mackenna, Macul, RM, Chile.}
\email[B. Barrios]{benjamin.barriosco@gmail.com}%
\thanks{Supported by ANID Master's Fellowship Folio  22221062 from Chile.}
%\thanks{This paper is in final form and no version of it will be submitted for publication elsewhere.}
\date{\today}
\subjclass[2020]{Primary 11G50; Secondary 11A05, 14G05} %
\keywords{Height, greatest common divisor, rational point, Riemann--Roch}%
%\dedicatory{}

%\begin{abstract} 
%\end{abstract}

\maketitle

%\tableofcontents

%%%%%%%%%%%%%%%%%%%%%%%%

\section{Introduction}

In the literature there are several bounds for the greatest common divisors in distinct cases. Bugeaud, Corvaja and Zannier \cite{MR1953049} gave an upper bound for the greatest common divisor of $a^n-1$ and $b^n-1$ where $a,b$ are multiplicatively independent integers. Later, Corvaja and Zannier \cite{MR2130274} generalized the previous result when $a^n$ and $b^n$ are replaced by elements of a fixed finitely generated subgroup of $\overline{\Q}^\ast$. In this way, Levin \cite{MR3910069} gave a GCD bound for polynomials in several variables with algebraic coefficients, generalizing the result obtained by Corvaja and Zannier. 

The GCD problem has been studied in various other settings. A similar result to the one by Bugeaud, Corvaja and Zannier was stated and proved by Ailon and Rudnick \cite{MR2046966} for non-constant and multiplicative independent complex polynomials. Some extensions of this last work were given by Ostafe in \cite{MR3539944}. In addition, a positive characteristic variation was established by Ghioca, Hsia and Tucker in \cite{MR3625452}.

In the context of Nevanlinna Theory, Pastén and Wang \cite{MR3632098} gave a GCD bound for algebraically independent meromorphic functions over $\C$. A stronger result of this type was obtained by Levin and Wang, see \cite{MR4160303}*{Theorem 1.3}.

We study the GCD problem in the context of varieties over number fields.

Let $K$ be a number field, $X$ be a smooth projective variety defined over $K$ and $\Lcal$ be a line sheaf on $X$. There is a height function $h_{X,\mathcal{L}}:X(\overline{K})\to\R$, see for instance \cite{MR883451}. For a divisor $D$ on $X$ we simply write $h_{X,D}$ instead of $h_{X,\Ocal(D)}$.

This height in the case of blow-ups is closely related to the generalized greatest common divisor. For the details see \cite{MR2162351}. 

\begin{definition}[\cite{MR2162351}*{Definition 2}]
    Let $X$ be a smooth variety defined over $K$ and $Y\subset X$ be an irreducible subvariety of $X$ also defined over $K$, of co-dimension $c\geq2$. Let $\pi:\Tilde{X}\to X$ be the blow up of $X$ along $Y$ and let $E_Y$ be the exceptional divisor of the blow up. For $x\in X\setminus Y$ we let $\tilde{x}=\pi^{-1}(x)$. \\
    The generalized (logarithmic) greatest common divisor of the point $x\in (X\setminus Y)(\overline{K})$ with respect to $Y$ is $$h_{\gcd}(x;Y)=h_{\Tilde{X},E_Y}(\tilde{x}).$$
\end{definition}

The goal of this work is to give an upper bound for $h_{gcd}(x;Y)$. For some examples of this quantity see \cite{MR2162351}.

Silvermann \cite{MR2162351} noticed that the generalized GCD is closely related to the Vojta's main conjecture. Bounds for the generalized GCD in varieties were given by Grieve in \cite{MR4121877} for rational points in certain toric varieties and by Wang and Yasukufu in \cite{MR4226990} under integrality conditions of the points. Recently, García-Fritz and Pastén \cite{MR4756363} gave an upper bound when the closed sub-scheme is reduced and consists on $d$ geometric points, without integrality conditions. In fact they have: 

\begin{teo}[\cite{MR4756363}*{Theorem 3.1}]
    Let $X$ be a smooth projective variety defined over a number field $K$ of dimension $n$ and $\Acal$ be an ample line sheaf on $X$. Let $Y$ be a reduced closed sub-scheme consisting on $d$ geometric points. Then given any $\varepsilon>0$ there is a properly contained Zariski closed set $Z_\varepsilon\subset X$ such that $$h_{\gcd}(x;Y)\leq \left(\sqrt[n]{\frac{d}{(\Acal^n)}}+\varepsilon\right)h(\Acal,x)+O(1)$$
    as $x$ varies in $(X-Z_{\varepsilon})(\overline{K})$. 
\end{teo}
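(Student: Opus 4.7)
The plan is to produce, for any $\varepsilon>0$, a single global section of $\pi^*\Acal^{\otimes m}\otimes \Ocal(-kE_Y)$ on $\tilde{X}$ with $k/m$ as close as we like to the critical ratio $\sqrt[n]{(\Acal^n)/d}$, and then to read the height bound off this section's non-vanishing at $\tilde{x}$. The exceptional set $Z_\varepsilon$ will be the image in $X$ of the zero divisor of this section, together with $Y$.

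First I would invoke the short exact sequence
$$0\to \Acal^{\otimes m}\otimes \Ical_Y^{k} \to \Acal^{\otimes m} \to \Acal^{\otimes m}/\Ical_Y^{k}\Acal^{\otimes m}\to 0,$$
which gives
$$h^0(X,\Acal^{\otimes m}\otimes \Ical_Y^{k})\;\geq\; h^0(X,\Acal^{\otimes m})\,-\,h^0(X,\Acal^{\otimes m}/\Ical_Y^{k}\Acal^{\otimes m}).$$
Since $\Acal$ is ample on the smooth projective $n$-fold $X$, Serre vanishing together with Riemann--Roch yields $h^0(X,\Acal^{\otimes m})=\frac{(\Acal^n)}{n!}m^n+O(m^{n-1})$ for $m\gg 0$. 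The quotient is a skyscraper sheaf supported on the $d$ smooth reduced points of $Y$, each contributing length $\binom{n+k-1}{n}$, so $h^0(X,\Acal^{\otimes m}/\Ical_Y^{k}\Acal^{\otimes m})=d\binom{n+k-1}{n}=\frac{d}{n!}k^n+O(k^{n-1})$, independently of $m$ and of $\Acal$. Therefore as soon as $k/m<\sqrt[n]{(\Acal^n)/d}$ and $m$ is large enough, the right-hand side of the estimate above is positive, producing a nonzero $s\in H^0(X,\Acal^{\otimes m}\otimes \Ical_Y^{k})$.

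Given $\varepsilon>0$, I would fix a rational $\alpha<\sqrt[n]{(\Acal^n)/d}$ with $1/\alpha<\sqrt[n]{d/(\Acal^n)}+\varepsilon$, and pick $m$ large and $k=\lfloor\alpha m\rfloor$ so that a section $s$ as above exists. Pulling $s$ back to $\tilde{X}$ and dividing by the $k$-th power of the tautological section of $\Ocal(E_Y)$ gives a nonzero $\tilde{s}\in H^0(\tilde{X},\pi^*\Acal^{\otimes m}\otimes \Ocal(-kE_Y))$. Set $Z_\varepsilon=Y\cup\pi(\mathrm{div}(\tilde{s}))$, a proper Zariski closed subset of $X$. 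For $x\in (X\setminus Z_\varepsilon)(\overline{K})$, the point $\tilde{x}$ lies outside $\mathrm{div}(\tilde{s})$, so the standard lower bound for heights along a Cartier divisor away from its support gives
$$m\,h_{\Acal}(x)-k\,h_{\gcd}(x;Y)=h_{\pi^*\Acal^{\otimes m}\otimes\Ocal(-kE_Y)}(\tilde{x})\geq O(1).$$
Dividing by $k$ together with $m/k<\sqrt[n]{d/(\Acal^n)}+\varepsilon$ delivers the claimed bound.

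The step I expect to be the main obstacle is controlling the Riemann--Roch error terms uniformly enough to allow $k/m$ to approach $\sqrt[n]{(\Acal^n)/d}$: one needs the $O(m^{n-1})$ and $O(k^{n-1})$ constants to be governed by data depending only on $X$ and $\Acal$, rather than on the specific pair $(m,k)$ chosen. This is plausibly the content of the ``rather uniform Riemann--Roch type inequality'' flagged in the abstract, and it seems to be the only point where something beyond routine bookkeeping is required.
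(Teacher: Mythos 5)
Your proposal is correct and follows essentially the same route as the paper: the short exact sequence for $\Ical_Y^k$, a dimension count forcing a section of $\Acal^{\otimes m}$ vanishing to order $\geq k$ along $Y$, and the height inequality for the resulting effective divisor on the blow-up (the paper's Proposition \ref{prop2.3.1}). The uniformity issue you flag at the end is in fact vacuous in this $0$-dimensional case, since the skyscraper quotient has exact length $d\binom{n+k-1}{n}$ with no error term; the paper's Riemann--Roch type inequality (Theorem \ref{nnrr}) is only needed when $Y$ has positive dimension.
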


In this work we follow the ideas of García-Fritz and Pastén to give an upper bound for the GCD when the closed sub-scheme is a higher dimensional irreducible sub-variety. Explicitly we obtain:

\begin{teo}[GCD bound]\label{thm2}
    Let $X$ be a smooth projective variety defined over a number field $K$ of dimension $n$ and $Y$ be an irreducible sub-variety of dimension $d$, also defined over $K$. Let $X'=X\setminus Y$. Then given any ample line sheaf $\mathcal{A}$ and $\varepsilon>0$, there is a properly contained Zariski closed set $Z_\varepsilon\subset X'$ such that, for all $x\in (X'\setminus Z_\varepsilon)(\overline{K})$ the formula: 
    $$h_{\gcd}(x;Y)\leq \left(\left(\frac{(\mathcal{A}^d\cdot Y)}{(\mathcal{A}^n)}\cdot\frac{n!}{c!}\right)^{\frac{1}{c}}+\varepsilon\right)h(\mathcal{A},x)+O_{\varepsilon}(1)$$
    holds, where $c=n-d$.
\end{teo}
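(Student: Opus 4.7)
Following the strategy of García-Fritz and Pastén, I pull the problem back to the blow-up $\pi:\tilde{X}\to X$ along $Y$, and aim to produce, for integers $N,m\geq 1$ with $N/m$ arbitrarily close to $\alpha:=\left(\tfrac{(\Acal^d\cdot Y)}{(\Acal^n)}\cdot\tfrac{n!}{c!}\right)^{1/c}$, a nonzero global section of $\pi^*\Acal^{\otimes N}\otimes\Ocal_{\tilde{X}}(-mE_Y)$ on $\tilde{X}$, or equivalently a nonzero section of $\Acal^{\otimes N}\otimes\Ical_Y^m$ on $X$. Such a section defines an effective divisor $D\sim N\pi^*\Acal-mE_Y$ on $\tilde{X}$, and setting $Z_\varepsilon:=\pi(\mathrm{Supp}(D))\cap X'$, the height machine yields for $x\in (X'\setminus Z_\varepsilon)(\overline{K})$
$$N\, h(\Acal,x) - m\, h_{\gcd}(x;Y) = h_{\tilde{X},N\pi^*\Acal-mE_Y}(\tilde{x}) + O(1) = h_{\tilde{X},D}(\tilde{x}) + O(1) \geq O_\varepsilon(1),$$
so that $h_{\gcd}(x;Y)\leq (N/m)h(\Acal,x) + O_\varepsilon(1) \leq (\alpha+\varepsilon)h(\Acal,x)+O_\varepsilon(1)$, as required.

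\textbf{The Riemann--Roch step.} To produce such a section, I would estimate $h^0(X,\Acal^{\otimes N}\otimes\Ical_Y^m)$ from below. The short exact sequence $0\to\Ical_Y^m\to\Ocal_X\to\Ocal_X/\Ical_Y^m\to 0$ tensored with $\Acal^{\otimes N}$ yields
$$h^0(X,\Acal^{\otimes N}\otimes\Ical_Y^m)\geq h^0(X,\Acal^{\otimes N}) - h^0(X,\Acal^{\otimes N}\otimes\Ocal_X/\Ical_Y^m).$$
Asymptotic Riemann--Roch on $X$ gives $h^0(X,\Acal^{\otimes N})=\tfrac{(\Acal^n)}{n!}N^n+O(N^{n-1})$. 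For the second term I would filter $\Ocal_X/\Ical_Y^m$ by the powers of $\Ical_Y$; the graded pieces $\Ical_Y^k/\Ical_Y^{k+1}$ are coherent sheaves on $Y$ of generic rank $\binom{k+c-1}{c-1}$. Combining Riemann--Roch on $Y$ for each piece with a careful accounting of the combinatorial factors---this being the ``rather uniform Riemann--Roch inequality'' announced in the abstract---should produce an estimate of the shape
$$h^0(X,\Acal^{\otimes N}\otimes\Ical_Y^m)\geq \frac{(\Acal^n)}{n!}N^n - \frac{(\Acal^d\cdot Y)}{c!}m^c N^d + (\text{lower order}),$$
valid as $N\to\infty$ with $m=O(N)$. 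The right-hand side is strictly positive exactly when $(m/N)^c<\alpha^{-c}$; choosing a rational $m/N$ with $N/m<\alpha+\varepsilon$ and $N$ large enough then produces the section called for in the first paragraph.

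\textbf{Main obstacle.} The principal technical difficulty is establishing the Riemann--Roch lower bound with the required uniformity: the error terms must remain subdominant as $N$ and $m$ (of order $N$) simultaneously grow, and the $m$ contributions from the graded pieces of the $\Ical_Y$-adic filtration must accumulate into precisely the constant $\tfrac{(\Acal^d\cdot Y)}{c!}$ in the leading $m^c N^d$ term, so that the threshold matches $\alpha^{-c}$. A secondary subtlety is that $Y$ is only assumed irreducible rather than smooth; over a number field $Y$ is generically smooth, which suffices to determine the generic rank of the graded pieces, while contributions from the singular locus are absorbed into the lower-order error. Packaging all of this into a single clean inequality uniform in $N$ and $m$ is the main content of the paper.
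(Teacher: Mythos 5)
Your reduction of the theorem to producing a nonzero section of $\Acal^{\otimes N}\otimes\Ical_Y^m$ with $N/m$ close to $\alpha$ is correct and is essentially the paper's own first step (Proposition \ref{prop2.3.1} together with the exact sequence $0\to\Ical_Y^m\to\Ocal_X\to\Ocal_X/\Ical_Y^m\to 0$). The gap is exactly where you say it is, and it is not closed by the route you sketch. Filtering $\Ocal_X/\Ical_Y^m$ by the powers of $\Ical_Y$ and applying asymptotic Riemann--Roch on $Y$ to each graded piece $\Ical_Y^k/\Ical_Y^{k+1}$ produces an error term $O_k(N^{d-1})$ whose constant depends on the coherent sheaf $\Ical_Y^k/\Ical_Y^{k+1}$, hence on $k$; since you must sum $m\sim N$ of these, you need the constants to grow no faster than $O(k^{c-1})$ uniformly, and asymptotic Riemann--Roch gives no such control. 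Moreover, with $Y$ only irreducible (possibly singular and not l.c.i.), the graded pieces are not $\mathrm{Sym}^k$ of the conormal sheaf, so even identifying their generic ranks and degrees requires an argument; "absorbed into the lower-order error" is precisely the claim that needs proof. As written, the central inequality of your second paragraph is asserted in shape but not established.

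The paper closes this gap by a different mechanism that avoids the filtration entirely. Lemma \ref{teo1.3.2} gives the completely uniform bound $h^0(Z,\Lcal)\leq(\Lcal^{\dim Z})+\dim Z$ for any projective scheme $Z$ and any ample base-point-free $\Lcal$, proved by induction on the dimension of the image of the map to projective space. Applying this to the thickening $Y_r=V(\Ical_Y^r)$ with the sheaf $\Acal^{\otimes N}|_{Y_r}$ pushes all dependence on $r$ into the single intersection number $(\Acal^d\cdot\Ocal_X/\Ical_Y^r)$, which factors as $\mathrm{length}_{\xi_Y}(\Ocal_X/\Ical_Y^r)_{\xi_Y}\cdot(\Acal^d\cdot Y)$; the length is then handled by the Hilbert--Samuel polynomial, giving $e_Y(X)\,r^c/c!+O(r^{c-1})$ (with $e_Y(X)=1$ since $X$ is smooth). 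This is Theorem \ref{nnrr}, and it is the "rather uniform Riemann--Roch inequality" of the abstract: it trades the sharp constant $1/(d!\,c!)$ that your filtration would ideally yield for the constant $1/c!$, but in exchange it is genuinely uniform in both $N$ and $r$ with no per-piece error accumulation. If you want to salvage your approach, you would need to prove a uniform-in-$k$ Riemann--Roch estimate for the graded pieces; the paper's Lemma \ref{teo1.3.2} is the cleaner substitute.
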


The 0-dimensional case studied by Pastén and García-Fritz has interesting applications in terms of the Bombieri--Lang conjecture and the Vojta conjecture, see Theorems 1.4 and 3.2 in \cite{MR4756363}. We expect similar consequences from Theorem \ref{thm2}, which we leave for future research. 

The proof of Theorem \ref{thm2} is inspired by the methods of \cite{MR4756363}. However, new complications appear and we need to prove an auxiliary result on the Asymptotic Riemann--Roch Theorem, see Theorem \ref{nnrr}.

\section{Riemann--Roch}

In this section we will give an estimate of the dimension of certain space that will be useful for the GCD bound.

We recall the Asymptotic Riemann--Roch theorem:

\begin{teo}(Asymptotic Riemann--Roch)\label{rr}
    Let $X$ be a projective variety of dimension $n$, $\Lcal$ be an ample line sheaf on $X$ and $\Fcal$ be a coherent sheaf on $X$. Then \begin{align*}
        h^i(X,\Lcal^{\otimes m}\otimes \Fcal)&=O(m^{n-1})~\mathrm{for}~i>0~\mathrm{and} \\
        h^0(X,\Lcal^{\otimes m}\otimes \Fcal)&=\frac{(\Lcal^n\cdot \Fcal)}{n!}m^n+O(m^{n-1})
    \end{align*}
    where the constant in $O(m^{n-1})$ depends on $X,\Fcal$ and $\Lcal$.
\end{teo}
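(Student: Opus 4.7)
The plan is to prove Theorem \ref{rr} in three steps: polynomiality of the Euler characteristic (Snapper's theorem), identification of the leading coefficient, and Serre vanishing for the higher cohomology.

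\textbf{Step 1: Polynomiality of $\chi$.} I would first show that $\chi(X,\Lcal^{\otimes m}\otimes \Fcal)$ is a polynomial in $m$ of degree at most $n$. Since $\Lcal$ is ample, passing to a sufficiently high power one may reduce to the case where $\Lcal = \Ocal(D)$ for a very ample divisor $D$ (the result for $\Lcal^{\otimes r}$ implies the result for each residue class $m \equiv a \pmod r$, which assembled together yields polynomiality in $m$). Choosing $D$ to avoid the associated points of $\Fcal$, tensoring the structure sequence $0\to \Ocal(-D)\to \Ocal \to \Ocal_D\to 0$ with $\Lcal^{\otimes m}\otimes \Fcal$ preserves exactness and yields
$$0\to \Lcal^{\otimes (m-1)}\otimes \Fcal \to \Lcal^{\otimes m}\otimes \Fcal \to (\Lcal^{\otimes m}\otimes \Fcal)|_D \to 0.$$
Additivity of $\chi$ on short exact sequences gives the telescoping identity $\chi(X,\Lcal^{\otimes m}\otimes \Fcal) - \chi(X,\Lcal^{\otimes (m-1)}\otimes \Fcal) = \chi(D, (\Lcal^{\otimes m}\otimes \Fcal)|_D)$. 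By induction on $\dim X$ applied to $D$ (which has dimension $n-1$), the right-hand side is a polynomial in $m$ of degree $\leq n-1$, so $\chi(X,\Lcal^{\otimes m}\otimes \Fcal)$ is polynomial in $m$ of degree $\leq n$. The base case $n=0$ is immediate since $\chi$ is then just a finite length, independent of $m$.

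\textbf{Step 2: Leading coefficient.} I would identify the leading term as $\frac{(\Lcal^n\cdot \Fcal)}{n!}m^n$. In the Snapper--Kleiman framework this is essentially the definition of the intersection number $(\Lcal^n\cdot \Fcal)$, but one can also check it concretely by comparing with the standard Hilbert polynomial in the case $\Fcal = \Ocal_X$ and then extending by additivity on short exact sequences using the filtration of $\Fcal$ with torsion-free quotients on its components.

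\textbf{Step 3: Higher cohomology.} For $i>0$ and $\Lcal$ ample, Serre vanishing provides an $m_0$ (depending on $X,\Lcal,\Fcal$) such that $H^i(X,\Lcal^{\otimes m}\otimes \Fcal)=0$ for all $m\geq m_0$. Hence $h^i$ vanishes for large $m$ and is bounded for $m<m_0$, so trivially $h^i(X,\Lcal^{\otimes m}\otimes \Fcal) = O(m^{n-1})$. Substituting into the expansion $\chi = h^0 + \sum_{i>0}(-1)^i h^i$ and using Step 2 gives the stated formula for $h^0$.

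The main obstacle lies in Step 1, where one has to arrange the dévissage so that the exact sequence genuinely remains exact (hence the restriction on $D$ with respect to associated points of $\Fcal$), and to pass cleanly from the very ample case back to an arbitrary ample $\Lcal$ via the residue-class argument. The other two steps are comparatively formal once Step 1 is in place.
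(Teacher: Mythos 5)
The paper does not actually prove Theorem \ref{rr}: it is recalled as a classical result (Snapper--Kleiman asymptotic Riemann--Roch), so there is no in-paper argument to compare against. Your proof is the standard one and is essentially correct: the dévissage by a very ample divisor chosen to avoid the associated points of $\Fcal$ gives polynomial growth of the Euler characteristic, the leading coefficient is the intersection number essentially by definition in the Snapper--Kleiman framework, and Serre vanishing handles the higher cohomology --- in fact it gives the stronger conclusion $h^i=0$ for $m\gg 0$ rather than merely $O(m^{n-1})$. The one step to tighten is your reduction from ample to very ample: knowing that $\chi(X,\Lcal^{\otimes m}\otimes\Fcal)$ is a polynomial in $m$ on each residue class modulo $r$ only yields a quasi-polynomial, not automatically a single polynomial. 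This does not damage the theorem, because the leading coefficients on the different residue classes all agree: twisting $\Fcal$ by $\Lcal^{\otimes a}$ does not change its support cycle, so each class contributes the same top term $(\Lcal^n\cdot\Fcal)\,m^n/n!$, which is all the asymptotic statement requires. If you want genuine polynomiality of $\chi$ in $m$, you should instead run Snapper's double induction directly, writing an arbitrary line bundle as a difference of two very ample ones.
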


Suppose that $X$ is a projective variety over a field $K$ and $Y\subset X$ is a subvariety of $X$ also defined over $K$. Then we have the ideal sheaf $\Ical_Y$ associated to $Y$, and for $r\geq 1$ we consider the coherent sheaf $\Ocal_X/\Ical_Y^r$. In Section \ref{sec3} we will need to estimate $h^0(X,\Lcal^{\otimes m}\otimes \Ocal_X/\Ical_Y^r)$ but we will have to know how this dimension changes as $r$ increases, which Theorem \ref{rr} does not give us. 

Nevertheless, the following two results will help us in to proceed.

\begin{lemma}\label{teo1.3.2}
    Let $Z$ be a projective and of finite type scheme over a number field $K$ of dimension $d$. Let $\mathcal{A}$ be an ample base point free line sheaf on $Z$. Then $$h^0(Z,\mathcal{A})\leq (\mathcal{A}^d)+d.$$
\end{lemma}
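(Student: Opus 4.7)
The plan is to induct on the dimension $d$, using the base-point-freeness of $\mathcal{A}$ to slice $Z$ by a hyperplane section. For the base case $d=0$, every line sheaf on a $0$-dimensional Noetherian scheme is trivial (each connected component is $\mathrm{Spec}$ of a local Artinian ring, and rank-one projective modules over local rings are free), so by asymptotic Riemann--Roch (Theorem~\ref{rr}) we have $h^0(Z,\mathcal{A})=h^0(Z,\mathcal{O}_Z) = (\mathcal{A}^0)$, and the inequality holds with equality.

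For the inductive step $d \geq 1$, base-point freeness lets us pick a global section $s \in H^0(Z,\mathcal{A})$ not vanishing at any of the (finitely many) associated points of $\mathcal{O}_Z$; such an $s$ is a non-zero-divisor. Its zero scheme $H := V(s) \subset Z$ has pure dimension $d-1$ by Krull's principal ideal theorem, and $\mathcal{A}|_H$ is again ample and base-point free. Taking global sections of the short exact sequence
$$0 \longrightarrow \mathcal{O}_Z \xrightarrow{\;\cdot s\;} \mathcal{A} \longrightarrow \mathcal{A}|_H \longrightarrow 0$$
yields $h^0(Z,\mathcal{A}) \leq h^0(Z,\mathcal{O}_Z) + h^0(H,\mathcal{A}|_H)$. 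Since $H$ belongs to the linear class of $\mathcal{A}$, one has $(\mathcal{A}|_H^{\,d-1})_H = (\mathcal{A}^{d-1}\cdot H)_Z = (\mathcal{A}^d)_Z$, so the inductive hypothesis gives $h^0(H,\mathcal{A}|_H) \leq (\mathcal{A}^d) + (d-1)$, and therefore
$$h^0(Z,\mathcal{A}) \;\leq\; h^0(Z,\mathcal{O}_Z) + (\mathcal{A}^d) + (d-1).$$

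The main obstacle is this last step: to close the bound to $(\mathcal{A}^d) + d$ one needs $h^0(Z,\mathcal{O}_Z) \leq 1$, which holds when $Z$ is geometrically connected and geometrically reduced (in particular, when $Z$ is geometrically integral). In the application in Section~\ref{sec3}, $Z$ is an infinitesimal thickening of the irreducible subvariety $Y$; under the natural assumption that $Y$ is geometrically irreducible---arranged, if necessary, by a preliminary finite base change $K' \supset K$---this thickening still satisfies $h^0(Z,\mathcal{O}_Z) = 1$, and the induction closes to the claimed inequality. An alternative path avoiding the auxiliary hypothesis is to use $|\mathcal{A}|$ to produce a finite morphism $\phi: Z \to \mathbb{P}^{h^0(Z,\mathcal{A})-1}_K$ with nondegenerate image $W$ and invoke the classical bound $\deg W \geq \mathrm{codim}(W) + 1$ together with $(\mathcal{A}^d) = \deg(\phi) \cdot \deg W \geq \deg W$, which is the cleanest way to package the same underlying geometric input.
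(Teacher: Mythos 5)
Your primary argument---slicing $Z$ itself by a member of $|\mathcal{A}|$ and inducting on $\dim Z$---has a genuine gap, and it is exactly the one you flag: the induction only closes if $h^0(Z,\mathcal{O}_Z)\leq 1$, which is not part of the hypotheses and is false in the generality of the lemma (disconnected $Z$, non-reduced $Z$, or $Z$ whose ring of constants is a proper extension of $K$). The proposed repair does not work for the case that actually matters: in Theorem~\ref{nnrr} the lemma is applied to the thickenings $Y_r=V(\mathcal{I}_Y^r)$, which are non-reduced, and for these $h^0(Y_r,\mathcal{O}_{Y_r})$ can be large even when $Y$ is geometrically integral. Indeed $H^0(Y,\mathcal{I}_Y^k/\mathcal{I}_Y^{k+1})=H^0(Y,\mathrm{Sym}^k(\mathcal{I}_Y/\mathcal{I}_Y^2))$ injects into $H^0(Y_r,\mathcal{O}_{Y_r})$ for $k<r$, and the conormal sheaf of $Y$ in $X$ can have many global sections (take $Y$ a section of a projective bundle with sufficiently negative normal bundle). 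So passing to a finite extension $K'$ does not rescue the estimate; the obstruction is non-reducedness, not failure of geometric connectedness. A secondary, more minor issue is that your identity $(\mathcal{A}|_H^{\,d-1})_H=(\mathcal{A}^d)_Z$ needs care when $Z$ has components of dimension $<d$, since the left side can pick up contributions from $(d-1)$-dimensional pieces of $H$ that the top self-intersection on $Z$ does not see.

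The one-sentence ``alternative path'' you mention at the end is in fact the paper's proof: use base-point-freeness and ampleness to get a finite morphism $q\colon Z\to\mathbb{P}^{N}$ with $N+1=h^0(Z,\mathcal{A})$ and nondegenerate image $W$, prove $N+1\leq\deg W+\dim W$ by induction on $\dim W$ (hyperplane sections of the \emph{image}, not of $Z$), and then use the projection formula $(\mathcal{A}^d)=\deg(q)\cdot\deg W\geq\deg W$ together with $\dim W\leq d$. Slicing the image rather than $Z$ is precisely what eliminates the troublesome $h^0(\mathcal{O}_Z)$ term, since the relevant count happens in $\mathbb{P}^N$ where the constants are just $K$. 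You should promote that remark to the actual proof and discard the direct induction on $Z$.
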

\begin{proof}
    Let $q:Z\to\Pro^{n}$ be the morphism defined by $\mathcal{A}$ and let $Y=q(Z)$ be its image. Then $h^0(Z,\Acal)=n+1$. Let $n_Y$ be the dimension of $Y$. \\
    First, we wil proof that $n+1\leq (\Ocal_{\Pro^n}(1)^{n_Y}\cdot Y)+n_Y$. We will do induction on $n_Y$. 
    
    If $n_Y=0$ then the inequality holds. 
    
    If $n_Y\geq 1$, let $H\subset\Pro^n$ be a hyperplane meeting $Y$ properly. Consider $Y'=Y\cap H\subset \Pro^{n-1}$. Then there is a properly contained $Z'\subset Z$ and a morphism $Z'\to Y'\subset \Pro^n$. By \cite{MR463157}*{Theorem 7.1, II} this morphism comes from an ample and base point free line sheaf $\Acal'$ on $Z'$. Then, by induction $$(n-1)+1\leq (\Ocal_{\Pro^{n-1}}(1)^{n_Y-1}\cdot Y')+(n_Y-1) \Rightarrow n+1 \leq (\Ocal_{\Pro^{n-1}}(1)^{n_Y-1}\cdot Y')+n_Y.$$
    Furthermore $(\Ocal_{\Pro^{n-1}}(1)\cdot Y')=(\Ocal_{\Pro^{n-1}}\cdot H\cap Y)=(\Ocal_{\Pro^n}(1)\cdot Y)$. 
    
    Therefore $$n+1\leq (\Ocal_{\Pro^n}(1)\cdot Y)+n_Y.$$
    Finally, by \cite{MR2095471}*{Corollary 1.2.15} the morphism $q:Z\to Y$ is finite. Then $q_{\ast}Z=\deg(q)\cdot Y$ in $K_{d}(\Pro^n)$. By Projection formula $(A^d\cdot Z)=t(\Ocal_{\Pro^n}(1)\cdot Y)$. Since $n_y\leq d$, we conclude.
\end{proof} 

\begin{teo}(Riemann--Roch type inequality)\label{nnrr}
    Let $X$ be a projective variety defined over $K$ and $Y\subset X$ be an irreducible subvariety of dimension $d$. Let $\Ical_Y$ be the ideal sheaf associated to $Y$. Then given any very ample line sheaf $\Acal$ on $X$, we have: $$h^0(X,\mathcal{A}^{\otimes m}\otimes \Ocal_X/\Ical_Y^r)\leq \frac{r^c\cdot m^{d}}{c!}e_X(Y)\cdot(\mathcal{A}^{d}\cdot Y)+O(r^{c-1})$$
    for $r\gg1$, where $c=\dim X-d$, $e_{Y}(X)$ is the algebraic multiplicity of $X$ along $Y$ and the constant in $O(r^{c-1})$ only depends on $X,Y$ and $\mathcal{A}$.
\end{teo}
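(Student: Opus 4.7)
The plan is to recognize $\Ocal_X/\Ical_Y^r$ as the structure sheaf of the closed subscheme $Y_r:=V(\Ical_Y^r)\subset X$ and then apply Lemma \ref{teo1.3.2} directly to $Y_r$, rather than attempting a graded-piece filtration by the sheaves $\Ical_Y^j/\Ical_Y^{j+1}$. Since $Y_r$ is supported on the irreducible $d$-dimensional $Y$, it is a projective $K$-scheme of dimension $d$. Writing $\iota\colon Y_r\hookrightarrow X$ for the closed immersion, we have $\Acal^{\otimes m}\otimes_{\Ocal_X}(\Ocal_X/\Ical_Y^r)\cong \iota_{\ast}(\Acal|_{Y_r}^{\otimes m})$, so that
$$h^0(X,\Acal^{\otimes m}\otimes \Ocal_X/\Ical_Y^r)=h^0(Y_r,\Acal|_{Y_r}^{\otimes m}).$$

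Since $\Acal$ is very ample on $X$, its restriction $\Acal|_{Y_r}$ is very ample on $Y_r$, and hence $\Acal|_{Y_r}^{\otimes m}$ is ample and base point free for every $m\geq 1$. Lemma \ref{teo1.3.2} applied to $(Y_r,\Acal|_{Y_r}^{\otimes m})$ yields
$$h^0(Y_r,\Acal|_{Y_r}^{\otimes m})\leq m^d\,(\Acal^d\cdot Y_r)+d.$$
The next step is to express $(\Acal^d\cdot Y_r)$ in terms of the data in the theorem. Because $Y_r$ has a unique $d$-dimensional component with reduction $Y$, a standard filtration of the coherent $\Ocal_{Y_r}$-module $\Ocal_{Y_r}$ whose quotients are of the form $\Ocal_W$ with $W\subseteq Y_r$ integral produces $\mu_r:=\ell_{\eta_Y}(\Ocal_X/\Ical_Y^r)$ codimension-zero pieces, each contributing $(\Acal^d\cdot Y)$ to the $d$-fold self-intersection; the remaining pieces are supported in dimension $<d$ and contribute $0$. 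This produces the identity
$$(\Acal^d\cdot Y_r)=\mu_r\,(\Acal^d\cdot Y).$$

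To conclude, I would apply the Hilbert--Samuel theorem to the Noetherian local ring $\Ocal_{X,\eta_Y}$, whose dimension equals $c=n-d$ and whose maximal ideal is $\mfrak=\Ical_Y\Ocal_{X,\eta_Y}$. This gives
$$\mu_r=\ell\bigl(\Ocal_{X,\eta_Y}/\mfrak^r\bigr)=\frac{e_X(Y)}{c!}\,r^c+O(r^{c-1})\qquad(r\gg 1),$$
and substitution into the previous displays produces the claimed bound. The step I expect to require the most care is the intersection-theoretic identity $(\Acal^d\cdot Y_r)=\mu_r(\Acal^d\cdot Y)$, since this is precisely the point where the nilpotent structure on $Y_r$ enters the argument; I would justify it by viewing $(\Acal^d\cdot-)$ as $d!$ times the leading coefficient of the Snapper/Hilbert polynomial, which is then additive in short exact sequences of coherent sheaves, combined with the vanishing of the top-degree self-intersection on supports of dimension strictly less than $d$.
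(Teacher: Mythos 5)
Your proposal is correct and follows essentially the same route as the paper: restrict the very ample sheaf to $Y_r$, apply Lemma \ref{teo1.3.2}, factor $(\Acal^d\cdot \Ocal_X/\Ical_Y^r)$ as $\mathrm{length}_{\xi_Y}(\Ocal_X/\Ical_Y^r)_{\xi_Y}\cdot(\Acal^d\cdot Y)$, and invoke the Hilbert--Samuel asymptotic (the paper cites Fulton, Example 4.3.4, for the same fact). Your version supplies more justification than the paper does for the intersection-theoretic identity via the Snapper-polynomial filtration argument, but the underlying proof is the same.
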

\begin{proof}
    Since $\Acal$ is very ample, then $\Acal|_{Y_r}$ is very ample on $Y_r$. By Lemma  then \begin{align}
        h^0(X,\Acal^{\otimes m}\otimes \Ocal_X/\Ical_Y^r)\leq (\Acal^d\cdot \Ocal_X/\Ical_Y^r)m^d+d.
    \end{align}
    Let $\xi_Y$ be the generic point of $Y$, then $$(\Acal^d\cdot\Ocal_X/\Ical_Y^r)=\mathrm{length}_{\xi_Y}\left(\Ocal_X/\Ical_Y^r\right)_{\xi_Y}(\Acal^d\cdot Y)$$ 
    By \cite{MR1644323}*{Example 4.3.4} this length is equal to $$e_Y(X)\cdot\frac{r^c}{c!}+O(r^{c-1})$$
    for $r\gg 1$. Replacing this in (2.1) give the result.
\end{proof}

\section{Proof of the GCD bound}\label{sec3}

In this section we obtain a bound for the generalized greatest common divisor. 

Troughout this section: \begin{itemize}
    \item $K$ is a number field 
    \item $X$ is a smooth projective variety defined over $K$ 
    \item $Y$ is an irreducible subvariety of $X$ of codimension $c\geq 2$ also defined over $K$
    \item $X'=X\setminus Y$
\end{itemize}

\begin{proposition}\label{prop2.3.1}
    For all ample line sheaf $\mathcal{A}$ on $X$ if there is an effective divisor $D$ on $X$ such that $\mathcal{O}(D)=\mathcal{A}^{\otimes m}$ and $r=m_Y(D)$ for some positive integers $m,r$, there is a properly contained Zariski closed set $Z\subset X$ such that for all $x\in (X'\setminus Z)(\overline{K})$: $$h_{\gcd}(x;Z)\leq \frac{m}{r}h(\mathcal{A},x)+O(1)$$
\end{proposition}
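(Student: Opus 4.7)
The idea is to exploit the decomposition of $\pi^{*}D$ on the blow-up and convert it into a height inequality. (I read the statement $h_{\gcd}(x;Z)$ in the conclusion as a typo for $h_{\gcd}(x;Y)$.)

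\textbf{Setup on the blow-up.} Let $\pi:\tilde{X}\to X$ be the blow-up of $X$ along $Y$ with exceptional divisor $E_Y$. Because $D$ is an effective Cartier divisor on $X$ with multiplicity $m_Y(D)=r$ along the generic point of $Y$, the proper transform $\tilde{D}$ of $D$ is an effective divisor on $\tilde{X}$, and the standard decomposition
\[
\pi^{*}D \;=\; \tilde{D} + r\, E_Y
\]
holds as Cartier divisors on $\tilde{X}$ (equivalently, $\pi^{*}\Ocal_{X}(D)\cong \Ocal_{\tilde{X}}(\tilde{D})\otimes \Ocal_{\tilde{X}}(E_Y)^{\otimes r}$). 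This is the key algebraic input; the rest is height-theoretic bookkeeping.

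\textbf{Translating to heights.} Taking height functions and using their standard additive behaviour with respect to divisor sums and functoriality under $\pi$, one obtains
\[
r\cdot h_{\tilde{X},E_Y}(\tilde{x}) \;=\; h_{\tilde{X},\pi^{*}D}(\tilde{x}) - h_{\tilde{X},\tilde{D}}(\tilde{x}) + O(1) \;=\; h_{X,D}(x) - h_{\tilde{X},\tilde{D}}(\tilde{x}) + O(1).
\]
Since $\Ocal_X(D)\cong \Acal^{\otimes m}$, we have $h_{X,D}(x)=m\,h(\Acal,x)+O(1)$, so
\[
r\cdot h_{\gcd}(x;Y) \;=\; m\, h(\Acal,x) - h_{\tilde{X},\tilde{D}}(\tilde{x}) + O(1).
\]

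\textbf{Choice of the exceptional set and conclusion.} Let $Z$ be the image in $X'$ of the support of $\tilde{D}$ under $\pi$; equivalently, since $\pi$ restricts to an isomorphism $\tilde{X}\setminus E_Y \xrightarrow{\sim} X'$, one may take $Z = |D|\cap X'$. This is a proper Zariski closed subset of $X'$ because $D$ is a divisor and $X'$ is open in $X$. For $x\in (X'\setminus Z)(\overline{K})$, the corresponding point $\tilde{x}$ lies off the support of the effective divisor $\tilde{D}$, so by the standard positivity property of heights associated to effective divisors we have $h_{\tilde{X},\tilde{D}}(\tilde{x}) \geq -O(1)$. Plugging this in and dividing by $r$ yields
\[
h_{\gcd}(x;Y) \;\leq\; \frac{m}{r}\, h(\Acal,x) + O(1),
\]
as required.

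\textbf{Main obstacle.} There is no serious obstacle here: the argument is essentially a formal consequence of the decomposition $\pi^{*}D=\tilde{D}+rE_Y$ and functoriality of heights. The only point requiring care is justifying that decomposition at the level of Cartier divisors --- that is, that $m_Y(D)=r$ really produces an effective proper transform $\tilde{D}$ on $\tilde{X}$. This is standard for blow-ups along subvarieties, and the effectivity of $\tilde{D}$ is what allows the lower bound $h_{\tilde{X},\tilde{D}}(\tilde{x})\geq -O(1)$ off its support, which is the step that determines the set $Z$.
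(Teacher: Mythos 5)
Your proposal is correct and follows essentially the same route as the paper: both rest on the decomposition $\pi^{*}D=\tilde{D}+r\,E_Y$, functoriality of heights under $\pi$, and the lower bound for the height of the effective divisor $\tilde{D}$ away from its support (the paper phrases this via the base locus $B$ of $\tilde{D}$ and takes $Z=\pi(B)$, while you take the image of the support, which amounts to the same exceptional set). You also correctly identify $h_{\gcd}(x;Z)$ in the statement as a typo for $h_{\gcd}(x;Y)$.
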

\begin{proof}
    Let $E_Y$ be the exceptional divisor of the blow-up of $X$ along $Y$ and $B$ be the base locus of $\tilde{D}$. By the assumption of $D$, given any $y\in(\tilde{X}\setminus B)(\overline{K})$, let $\pi(x)=y$, then we have \begin{align*}
    r\cdot h_{\gcd}(x;Y) &= r\cdot h_{\tilde{X},E_Y}(y)+O(1) \\
    &\leq h_{\tilde{X},\pi^{\ast}(D)}(y)+O(1) \\
    &= h_{X,D}(x)+O(1)\\
    &= m\cdot h_{X,\mathcal{A}}(x)+O(1)
    \end{align*}
    Thus $$h_{\gcd}(x;Y)\leq \frac{m}{r}h_{X,\mathcal{A}}(x)+O(1)$$
    for all $x\in (X'\setminus Z)(\overline{K})$ with $Z=\pi(B)$.
\end{proof}

\begin{teo}\label{teo2.3.5}
    Given any ample line sheaf $\mathcal{A}$ on $X$ and $\varepsilon>0$ there is a properly contained Zariski closed set $Z_\varepsilon\subset X$ such that for all $x\in (X'\setminus Z_\varepsilon)(\overline{K})$: $$h_{\gcd}(x;Y)\leq \left(\left(\frac{(\mathcal{A}^d\cdot Y)}{(\mathcal{A}^n)}\cdot\frac{n!}{c!}\right)^{\frac{1}{c}}+\varepsilon\right)h_{X,\mathcal{A}}(x)+O_{\varepsilon}(1).$$
\end{teo}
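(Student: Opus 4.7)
The plan is to exhibit, for each $\varepsilon>0$, a single effective divisor $D\in|\mathcal{A}^{\otimes m}|$ (for some large $m$) whose multiplicity $r=m_Y(D)$ along $Y$ satisfies $m/r\leq\alpha+\varepsilon$, where
\[
\alpha=\left(\frac{(\mathcal{A}^{d}\cdot Y)}{(\mathcal{A}^{n})}\cdot\frac{n!}{c!}\right)^{1/c},
\]
and then to invoke Proposition~\ref{prop2.3.1}.

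The first step is a reduction to the very ample case. Since Theorem~\ref{nnrr} requires a very ample line sheaf while $\mathcal{A}$ is merely ample, I would work with $\mathcal{B}=\mathcal{A}^{\otimes k}$ for $k$ large enough to make $\mathcal{B}$ very ample. Intersection numbers scale as $(\mathcal{B}^{i}\cdot Y)=k^{i}(\mathcal{A}^{i}\cdot Y)$, so the corresponding constant becomes $\alpha_{\mathcal{B}}=\alpha/k$; since $h_{X,\mathcal{B}}(x)=k\cdot h_{X,\mathcal{A}}(x)+O(1)$, any bound of the form $h_{\gcd}(x;Y)\leq(\alpha_{\mathcal{B}}+\varepsilon')h_{X,\mathcal{B}}(x)+O(1)$ translates into $h_{\gcd}(x;Y)\leq(\alpha+k\varepsilon')h_{X,\mathcal{A}}(x)+O(1)$, and taking $\varepsilon'=\varepsilon/k$ yields the target inequality.

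The second step is to produce the divisor $D$. Tensoring the short exact sequence $0\to\mathcal{I}_Y^{r}\to\mathcal{O}_X\to\mathcal{O}_X/\mathcal{I}_Y^{r}\to 0$ with $\mathcal{B}^{\otimes m}$ gives
\[
h^0(X,\mathcal{B}^{\otimes m}\otimes\mathcal{I}_Y^{r})\geq h^0(X,\mathcal{B}^{\otimes m})-h^0(X,\mathcal{B}^{\otimes m}\otimes\mathcal{O}_X/\mathcal{I}_Y^{r}).
\]
Asymptotic Riemann--Roch (Theorem~\ref{rr}) gives $h^0(X,\mathcal{B}^{\otimes m})=\tfrac{(\mathcal{B}^{n})}{n!}m^{n}+O(m^{n-1})$, while Theorem~\ref{nnrr}, combined with the fact that $X$ smooth forces $e_Y(X)=1$ at the generic point of $Y$ (regular local ring), bounds the second term by $\tfrac{r^{c}m^{d}}{c!}(\mathcal{B}^{d}\cdot Y)+O(r^{c-1}m^{d})$. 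Choosing $r=\lfloor\beta m\rfloor$ with a fixed $\beta<1/\alpha_{\mathcal{B}}$, the main term of $h^0(X,\mathcal{B}^{\otimes m})$ strictly dominates that of $h^0(X,\mathcal{B}^{\otimes m}\otimes\mathcal{O}_X/\mathcal{I}_Y^{r})$, and the combined $O(m^{n-1})$ error is absorbed for $m$ sufficiently large. Hence a nonzero section exists, producing an effective $D$ with $\mathcal{O}(D)=\mathcal{B}^{\otimes m}$ and $m_Y(D)\geq r$; by letting $\beta\to(1/\alpha_{\mathcal{B}})^{-}$ one makes $m/r$ arbitrarily close to $\alpha_{\mathcal{B}}$ from above.

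Finally, applying Proposition~\ref{prop2.3.1} to $(\mathcal{B},D)$ furnishes the proper Zariski closed $Z_{\varepsilon}\subset X$ and the bound $h_{\gcd}(x;Y)\leq(m/r)h_{X,\mathcal{B}}(x)+O(1)$ on $(X'\setminus Z_{\varepsilon})(\overline{K})$; converting back to $\mathcal{A}$ via the height scaling concludes.

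\textbf{Main obstacle.} The delicate point is the error term in Theorem~\ref{nnrr}: the stated $O(r^{c-1})$ hides an implicit factor of $m^{d}$ descending from Lemma~\ref{teo1.3.2}, so the actual joint error has size $O(r^{c-1}m^{d})$. After the balance $r\asymp m$ this is $O(m^{n-1})$, which is of strictly lower order than the $\Theta(m^{n})$ gap between the main terms of $h^0(X,\mathcal{B}^{\otimes m})$ and $h^0(X,\mathcal{B}^{\otimes m}\otimes\mathcal{O}_X/\mathcal{I}_Y^{r})$. Verifying this joint asymptotic carefully — and making sure $e_Y(X)=1$ is applied in the right spot so that the bound does not pick up a spurious multiplicity factor — is the crucial calculation on which the whole argument rests.
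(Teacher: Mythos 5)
Your proposal follows essentially the same route as the paper: produce a section of $\mathcal{B}^{\otimes m}\otimes\mathcal{I}_Y^{r}$ by comparing $h^0(X,\mathcal{B}^{\otimes m})$ with $h^0(X,\mathcal{B}^{\otimes m}\otimes\mathcal{O}_X/\mathcal{I}_Y^{r})$ via Theorems \ref{rr} and \ref{nnrr}, feed the resulting divisor into Proposition \ref{prop2.3.1}, and rescale to pass between ample and very ample. Your explicit remarks that smoothness of $X$ forces $e_Y(X)=1$ and that the error term in Theorem \ref{nnrr} carries a hidden factor of $m^{d}$ (so the joint error is $O(m^{n-1})$ after setting $r\asymp m$) are correct and in fact make explicit two points the paper's own proof leaves implicit.
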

\begin{proof}
    First consider the case when $\Acal$ is very ample. By Proposition \ref{prop2.3.1} we have to guarantee the existence of a divisor $D$ in $X$ such that $\Ocal(D)=\Acal^{\otimes m}$ and $r=m_Y(D)$ for some $m,r$. 

    Let $m$ and $r$ be positive integers and $\Ical_Y$ be the ideal sheaf associated to $Y$, then the associated exact sequence to $\Ical_Y^r$ is \begin{equation*}
        0\to \Ical_Y^r\to \mathcal{O}_X\to \mathcal{O}_X/\Ical_Y^r\to 0
    \end{equation*}
    tensoring by $\Acal^{\otimes m}$ and passing to global sections we obtain \begin{equation*}
        0\to H^0(X,\mathcal{A}^{\otimes m}\otimes \mathcal{I}_Y^r)\to H^0(X,\mathcal{A}^{\otimes m})\to H^0(X,\mathcal{A}^{\otimes m}\otimes \mathcal{O}_X/\mathcal{I}_Y^r)
    \end{equation*}
    If $h^0(X,\mathcal{A}^{\otimes m}) > h^0(X,\mathcal{A}^{\otimes m}\otimes \mathcal{O}_X/\mathcal{I}_Y^r)$ then $H^0(X,\mathcal{A}^{\otimes m}\otimes \mathcal{I}_Y^r)$ is non-trivial and we obtain the desired divisor (in fact we obtain $m_Y(D)\geq r$ but this is enough).

    Now, given any $\varepsilon>0$ there is a sufficiently large choice for $m,r$ such that $$\frac{(\Acal^n)}{n!}m^n > \frac{r^c\cdot m^d}{c!}(\Acal^d\cdot Y)~~~~\mathrm{and}~~~~\frac{m}{r}<\left(\frac{(\Acal^d\cdot Y)}{(\Acal^n)}\cdot\frac{n!}{c!}\right)^{\frac{1}{c}}+\varepsilon$$
    By Theorems \ref{rr} and \ref{nnrr}, when $\Acal$ is very ample we obtain the result.

    Finally, if $\Acal$ is ample consider an integer $k$ such that $\mathcal{A}^{\otimes k}$ is very ample. Applying the result to $\mathcal{A}^{\otimes k}$ we obtain that \begin{align*}
        h_{\gcd}(x;Y)&\leq \left(\left(\frac{((\mathcal{A}^{\otimes k})^d\cdot Y)}{((\mathcal{A}^{\otimes k})^n)}\cdot\frac{n!}{c!}\right)^{\frac{1}{c}}+\varepsilon\right)h_{X,\mathcal{A}^{\otimes k}}(x)+O_{\varepsilon}(1) \\
        &\leq \left(\left(\frac{k^d(\mathcal{A}^d\cdot Y)}{k^n(\mathcal{A}^n)}\cdot\frac{n!}{c!}\right)^{\frac{1}{c}}+\varepsilon\right)k\cdot h_{X,\mathcal{A}}(x)+O_{\varepsilon}(1) \\
        &\leq \left(\frac{1}{k}\left(\frac{(\mathcal{A}^d\cdot Y)}{(\mathcal{A}^n)}\cdot\frac{n!}{c!}\right)^{\frac{1}{c}}+\varepsilon\right)k\cdot h_{X,\mathcal{A}}(x)+O_{\varepsilon}(1).
    \end{align*}
    From this, we can conclude.
\end{proof}

%\section{Applications}

%Finally, an upper bound for the generalized GCD of points with respect to subvarieties in $\Pro^n$.

%\begin{teo}
%    Let $\varepsilon>0$ and $Y\subset\Pro^n$ be a sub-variety of degree $d$. Then there is a properly contained Zariski closed set $Z_\varepsilon\subset X$ such that $$h_{\gcd}(x;Y)\leq (d^{\frac{1}{c}}+\varepsilon)h(x)+O(1)$$
%    for all $x\in (X\setminus(Y\cup Z_\varepsilon))(\overline{\Q})$, where $c=\mathrm{codim}(Y,X)$.
%\end{teo}
%\begin{proof}
%    Consider $X=\Pro^n$, $Y=Y$ and $\Acal=\Ocal(1)$ in Theorem \ref{teo2.3.5}.
%\end{proof}

\section{Acknowledgments}

The author was supported by ANID Master's Fellowship Folio 22221062.

This work arise in the context of my Master's thesis, because of this I would like to thank to my advisor Héctor Pastén and to the readers Siddarth Mathur and Ricardo Menares for their comments. I also thank to the referee for his suggestions to improve the content of this article.

%%%%%%%%%%%%%%%%%%%%%%%%

\begin{bibdiv}
\begin{biblist}

\bib{MR2046966}{article}{
   author={Ailon, Nir},
   author={Rudnick, Z\'{e}ev},
   title={Torsion points on curves and common divisors of $a^k-1$ and
   $b^k-1$},
   journal={Acta Arith.},
   volume={113},
   date={2004},
   number={1},
   pages={31--38},
   issn={0065-1036}
}

\bib{MR1953049}{article}{
   author={Bugeaud, Yann},
   author={Corvaja, Pietro},
   author={Zannier, Umberto},
   title={An upper bound for the G.C.D. of $a^n-1$ and $b^n-1$},
   journal={Math. Z.},
   volume={243},
   date={2003},
   number={1},
   pages={79--84},
   issn={0025-5874}
}

\bib{MR2130274}{article}{
   author={Corvaja, Pietro},
   author={Zannier, Umberto},
   title={A lower bound for the height of a rational function at $S$-unit
   points},
   journal={Monatsh. Math.},
   volume={144},
   date={2005},
   number={3},
   pages={203--224},
   issn={0026-9255}
}

\bib{MR1644323}{book}{
   author={Fulton, William},
   title={Intersection theory},
   volume={2},
   edition={2},
   publisher={Springer-Verlag, Berlin},
   date={1998},
   pages={xiv+470},
   isbn={3-540-62046-X},
   isbn={0-387-98549-2}
}

\bib{MR4756363}{article}{
   author={Garcia-Fritz, Natalia},
   author={Pasten, Hector},
   title={A criterion for nondensity of integral points},
   journal={Bull. Lond. Math. Soc.},
   volume={56},
   date={2024},
   number={6},
   pages={1939--1950}
}

\bib{MR3625452}{article}{
   author={Ghioca, Dragos},
   author={Hsia, Liang-Chung},
   author={Tucker, Thomas J.},
   title={On a variant of the Ailon-Rudnick theorem in finite
   characteristic},
   journal={New York J. Math.},
   volume={23},
   date={2017},
   pages={213--225}
}

\bib{MR4121877}{article}{
   author={Grieve, Nathan},
   title={Generalized GCD for toric Fano varieties},
   journal={Acta Arith.},
   volume={195},
   date={2020},
   number={4},
   pages={415--428},
   issn={0065-1036}
}

\bib{MR463157}{book}{
   author={Hartshorne, Robin},
   title={Algebraic geometry},
   series={Graduate Texts in Mathematics, No. 52},
   publisher={Springer-Verlag, New York-Heidelberg},
   date={1977},
   pages={xvi+496},
   isbn={0-387-90244-9}
}

\bib{MR2095471}{book}{
   author={Lazarsfeld, Robert},
   title={Positivity in algebraic geometry. I},
   volume={48},
   publisher={Springer-Verlag, Berlin},
   date={2004},
   pages={xviii+387},
   isbn={3-540-22533-1}
}

\bib{MR3910069}{article}{
   author={Levin, Aaron},
   title={Greatest common divisors and Vojta's conjecture for blowups of
   algebraic tori},
   journal={Invent. Math.},
   volume={215},
   date={2019},
   number={2},
   pages={493--533},
   issn={0020-9910}
}

\bib{MR4160303}{article}{
   author={Levin, Aaron},
   author={Wang, Julie Tzu-Yueh},
   title={Greatest common divisors of analytic functions and Nevanlinna
   theory on algebraic tori},
   journal={J. Reine Angew. Math.},
   volume={767},
   date={2020},
   pages={77--107},
   issn={0075-4102}
}

\bib{MR3539944}{article}{
   author={Ostafe, Alina},
   title={On some extensions of the Ailon-Rudnick theorem},
   journal={Monatsh. Math.},
   volume={181},
   date={2016},
   number={2},
   pages={451--471},
   issn={0026-9255}
}

\bib{MR3632098}{article}{
   author={Pasten, Hector},
   author={Wang, Julie Tzu-Yueh},
   title={GCD bounds for analytic functions},
   journal={Int. Math. Res. Not. IMRN},
   date={2017},
   number={1},
   pages={47--95},
   issn={1073-7928}
}

\bib{MR2162351}{article}{
   author={Silverman, Joseph H.},
   title={Generalized greatest common divisors, divisibility sequences, and
   Vojta's conjecture for blowups},
   journal={Monatsh. Math.},
   volume={145},
   date={2005},
   number={4},
   pages={333--350},
   issn={0026-9255},
}

\bib{MR883451}{book}{
   author={Vojta, Paul},
   title={Diophantine approximations and value distribution theory},
   series={Lecture Notes in Mathematics},
   volume={1239},
   publisher={Springer-Verlag, Berlin},
   date={1987},
   pages={x+132},
   isbn={3-540-17551-2},
}

\bib{MR4226990}{article}{
   author={Wang, Julie Tzu-Yueh},
   author={Yasufuku, Yu},
   title={Greatest common divisors of integral points of numerically
   equivalent divisors},
   journal={Algebra Number Theory},
   volume={15},
   date={2021},
   number={1},
   pages={287--305},
   issn={1937-0652}
}

\end{biblist}
\end{bibdiv}

\end{document}